\newtheorem{lemma}{Lemma}
\newtheorem{corollary}[lemma]{Corollary}
\newtheorem{claim}[lemma]{Claim}
\newtheorem{theorem}[lemma]{Theorem}
\newtheorem{conjecture}[lemma]{Conjecture}
\theoremstyle{definition}
\newcommand{\Remark}[1]{}
\title{A counterexample to Stein's Equi-$n$-square Conjecture}
\author{Alexey Pokrovskiy\thanks{Department of Mathematics, ETH, 8092 Zurich, 
Switzerland. {\tt dr.alexey.pokrovskiy@gmail.com}. 
Research supported in part by SNSF grant 200021-175573.}
\and
Benny Sudakov
\thanks{Department of Mathematics, ETH, 8092 Zurich, Switzerland. 
{\tt benjamin.sudakov@math.ethz.ch}. 
Research supported in part by SNSF grant 200021-175573.}}
\date{}
\begin{document}

\maketitle

\begin{abstract}
In 1975 Stein conjectured that in every $n\times n$ array filled with the numbers $1, \dots, n$  with every number occuring exactly $n$ times, there is a partial transversal of size $n-1$. In this note we show that this conjecture is false by constructing such arrays without partial transverals of size $n-\frac{1}{42}\ln n$.
\end{abstract}

\section*{Introduction}
Latin squares are combinatorial objects introduced by Euler in the 18th century. An order $n$ \emph{Latin square} is an $n\times n$ array filled with $n$ symbols such that no symbol appears twice in the same row or column. A \emph{partial transversal} is a collection of cells of the Latin square which do not share the same row, column or symbol. 
Starting with Euler (see \cite{Euler, keedwell2015latin}), questions about transversals in Latin squares were extensively studied.  
The most natural question about them is ``how large a partial transversal one can guarantee to find in every $n\times n$ Latin square?'' A well known conjecture of Ryser, Brualdi, and Stein \cite{Brualdi, Ryser,  Stein}  is that the answer should be $n-1$.

Notice that $n\times n$ Latin squares have the property that every symbol occurs precisely $n$ times. Over 40 years ago, Stein conjectured that this condition on its own is sufficient to guarantee a partial transversal of size $n-1$. In~\cite{Stein}, an \emph{equi-$n$-square} is defined to be an $n\times n$ array filled with $n$ symbols such that every symbol occurs precisely $n$ times, and it is conjectured that every  equi-$n$-square has a partial transversal of size $n-1$. 
\begin{conjecture}[Stein, \cite{Stein}]\label{Conjecture_Stein}
Let $S$ be an $n\times n$ array filled with the symbols $1, \dots, n$ so that each number occurs exactly $n$ times. Then $S$ has a partial transversal of size $n-1$.
\end{conjecture}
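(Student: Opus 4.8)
The plan is to recast the statement as a rainbow-matching problem and then attack it by an augmentation/switching argument. View $S$ as an edge-colouring of the complete bipartite graph $K_{n,n}$ whose two vertex classes are the $n$ rows and the $n$ columns: the edge between row $i$ and column $j$ is coloured with the symbol $S_{ij}$, and the hypothesis is that each of the $n$ colours is used on exactly $n$ edges. A partial transversal of size $k$ is then precisely a rainbow matching of size $k$, i.e.\ a matching all of whose edges have distinct colours. So the goal becomes: every such colouring admits a rainbow matching of size $n-1$.

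First I would fix a maximum rainbow matching $M$ with $|M|=m$ and suppose for contradiction that $m\le n-2$. Let $R$, $C$, $F$ be the sets of rows, columns and colours missed by $M$; each has size $n-m\ge 2$. Maximality forces a clean local structure: no edge of a free colour can have both endpoints uncovered, so every cell of the $|R|\times|C|$ subarray on the free rows and columns carries a colour already used by $M$. The task is to show this configuration cannot survive.

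To enlarge $M$ I would use switching moves. The basic move picks a free row $r\in R$ and free column $c\in C$, reads the (necessarily used) colour $\sigma=S_{rc}$, deletes the matching edge $e\in M$ of colour $\sigma$, and inserts the cell $(r,c)$; this produces another maximum rainbow matching of the same size but exposes the two endpoints of $e$ as newly free, thereby altering the free subarray we must inspect. Chaining such moves generates alternating walks through the space of size-$m$ rainbow matchings, and I would search for a walk that eventually exposes a cell coloured by a free colour in $F$: that cell could then be added, contradicting maximality. To force such a walk to exist I would count. Each used colour occurs $n$ times in $S$ yet is used only once by $M$, so there is a large surplus of repeated colours to feed the switching, and a Woolbright--Brouwer-type double count on the free subarray should already give $m\ge n-O(\sqrt n)$. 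To close the gap from $n-O(\sqrt n)$ to the exact value $n-1$ I would overlay an absorption strategy: reserve beforehand a small, flexible rainbow gadget able to swallow any few leftover free rows, columns and colours, build a near-perfect rainbow matching on the remaining part greedily or at random, and finish by absorbing the remainder.

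The main obstacle, I expect, is controlling the switching globally rather than locally. Each single move is forced and harmless, but a long sequence can revisit the same colours and create clashes, so one must prove that the alternating walks genuinely branch outward and reach a free colour instead of becoming trapped cycling among a fixed small pool of used symbols. Equivalently, one has to rule out a self-consistent blocking configuration in which every switching attempt merely permutes a bounded set of rows, columns and symbols among themselves. Certifying that the surplus copies of each repeated symbol are spread out enough to prevent such traps is the crux --- and it is exactly the point at which the weaker equi-$n$-square hypothesis, which permits a symbol to repeat within a row or column, offers far less leverage than the proper-colouring hypothesis available for genuine Latin squares.
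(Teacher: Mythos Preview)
Your proposal attempts to \emph{prove} the conjecture, but the paper \emph{disproves} it: Theorem~\ref{Theorem_Counterexamples} constructs, for all large $n$, equi-$n$-squares with no partial transversal of size larger than $n-\tfrac{1}{42}\ln n$. So no argument along your lines can succeed, and the issue is not a gap in the execution but the direction itself.

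What is instructive is that the obstacle you yourself flag in your final paragraph --- a ``self-consistent blocking configuration in which every switching attempt merely permutes a bounded set of rows, columns and symbols among themselves'' --- is precisely what the paper builds. The construction partitions the rows and columns into blocks $R_i,C_i$ of sizes $x_i\approx\tfrac{1}{3}\sqrt{n/i}$, places a dedicated set $N_i$ of $2x_i-1$ symbols exclusively in the L-shaped region $H_i\cup J_i$ around the diagonal block $F_i=R_i\times C_i$, and fills the diagonal blocks $F_i$ with a small set $B$ of about $\tfrac{1}{20}\ln n$ ``bad'' symbols. Any large transversal must use almost all symbols of $B$, forcing many entries inside the $F_i$; but each entry in $F_i$ kills one row of $R_i$ and one column of $C_i$, so it blocks two symbols of $N_i$ while recovering only one. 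The net loss is at least $\sum z_i\ge\tfrac{1}{42}\ln n$ symbols. This is exactly a trap your switching walks cannot escape: the symbols in $N_i$ are concentrated in rows and columns that the $B$-entries already occupy, and because equi-$n$-squares allow a symbol to repeat arbitrarily within a row or column, nothing prevents this concentration. Your closing intuition that the equi-$n$-square hypothesis ``offers far less leverage'' is correct to the point of being fatal.
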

Attempts to prove this conjecture have lead to the development of important tools in extremal combinatorics. Stein's Conjecture was the setting of the first application of the Lopsided Lovasz Local Lemma \cite{ErdosSpencer}---Erd\H{o}s and Spencer introduced this variant of the local lemma to show that every $n\times n$ array with $\leq (n-1)/16$ occurences of every symbol has a size $n$ transversal. 
Later Alon, Spencer, and Tetali showed that if there are $\leq \epsilon n$ occurences of every symbol and $n$ is a power of $2$, then the square can be completely decomposed into size $n$ transversals.
When working in equi-$n$-squares, the best currently known result is due to Aharoni, Berger, Kotlar, and Ziv \cite{ABKZ}---they used topological methods to show that such arrays always have a partial transversal of size $2n/3$. This improved on an earlier result of Stein~\cite{Stein} who used the probabilistic method to show that a partial transversal of size $(1-e^{-1})n$ exists in every equi-$n$-square

In this note we produce counterexamples to Conjecture~\ref{Conjecture_Stein}. 
\begin{theorem}\label{Theorem_Counterexamples}
For all sufficiently large $n$, there exist $n\times n$ arrays filled with the symbols $1, \dots, n$ so that each symbol occurs exactly $n$ times, which have no partial transversals larger than $n -\frac{1}{42}\ln n$.
\end{theorem}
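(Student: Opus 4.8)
The plan is to exhibit an explicit $n\times n$ array $S$ in which each symbol occurs exactly $n$ times, and to prove separately that $S$ has no partial transversal of size $n-\frac{1}{42}\ln n$. A partial transversal of size $n-\ell$ uses $n-\ell$ rows, $n-\ell$ columns and $n-\ell$ distinct symbols, hence omits $\ell$ of each; the aim is to design $S$ so that, whichever $\ell$ rows, $\ell$ columns and $\ell$ symbols are omitted, the resulting bijection between the chosen rows and columns is forced to repeat a symbol.

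One observation shapes the design. The bound cannot come from a ``covering'' or Hall/K\"onig-type deficiency: if $p$ symbols occur only inside some set of $q$ columns then counting cells forces $pn\le qn$, so $q\ge p$ (and similarly for rows), and no set of few rows and columns can account for the array while using few symbols. Thus $S$ must instead carry a global, algebraic obstruction --- in the spirit of the classical fact that a cyclic group table of even order has no full transversal, where summing the symbols along a putative transversal gives a contradiction modulo $n$ --- and, crucially, one that is not destroyed by deleting $\ell$ rows, $\ell$ columns and $\ell$ symbols. Since a single such obstruction is always ``dodgeable'' once one symbol is freed up, the construction must stack $\Theta(\ln n)$ obstructions that cannot all be dodged simultaneously.

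Concretely I would build $S$ with a nested family of $\Theta(\ln n)$ scales: row sets $R_1\supseteq R_2\supseteq\cdots$ and column sets $C_1\supseteq C_2\supseteq\cdots$ with $|R_i|,|C_i|$ shrinking geometrically, together with pairwise disjoint symbol blocks $T_1,T_2,\dots$ earmarked scale by scale. Inside $R_i\times C_i$ the entries of $S$ are chosen (using symbols of $T_i,T_{i+1},\dots$) so that the restriction of $S$ to $R_i\times C_i$ contains a copy of a constant-size hard gadget --- a cyclic-group or finite-field array in which any partial transversal using all but boundedly many of its rows already repeats a symbol. The remaining cells are filled in so that every symbol still occurs exactly $n$ times, and arranging this ``padding'' so that it is genuinely inert is the delicate part of the construction.

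To bound a partial transversal $M$ of size $n-\ell$ one descends through the scales: inside $R_i\times C_i$ the gadget forces $M$ to incur a fresh unit of deficiency --- an extra missing row of $R_i$, or column of $C_i$, or (this is where the earmarked set $T_i$ enters) symbol of $T_i$ --- and because the scales are nested and the $T_i$ disjoint, these units genuinely add, giving $\ell=\Omega(\ln n)$; tracking the geometric ratio of the $|R_i|$, the size and loss of the gadget, and the overhead of the padding yields the explicit constant $\frac{1}{42}$. The main obstacle I anticipate is ruling out the escape route in which $M$ sends most rows of $R_i$ out through the columns of $C_i^{\,c}$ (or symmetrically) into the padding region, where the local gadget has no grip and $M$ has ample freedom; making the accounting leak-proof --- so that escaping scale $i$ instead forces a deficiency charged at a coarser scale --- is the crux, and the part I expect to occupy most of the proof.
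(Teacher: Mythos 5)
Your proposal correctly identifies the crux --- the ``escape route'' in which the transversal meets the rows of $R_i$ through cells outside $C_i$, where the local gadget has no grip --- but it does not solve it, and the mechanism you propose (constant-size algebraic gadgets at $\Theta(\ln n)$ nested scales) cannot be made to work. The immediate obstacle is the equi-$n$ condition itself: every symbol must occur exactly $n$ times, so no symbol can live exclusively inside a gadget of area $o(n)$; the gadget's symbols necessarily spill into the padding, where the transversal can collect them while routing around the gadget. A parity obstruction of cyclic-group type is destroyed the moment a single gadget cell is traded for a padding cell carrying a fresh symbol, and it only ever yields deficiency $1$ per gadget in any case; you give no mechanism by which these units ``genuinely add'' across nested scales (a row missing from $R_i$ is also missing from $R_1\supseteq R_i$, so the natural accounting double-counts), and you explicitly defer the leak-proofing, which is where all the work lies.

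The paper closes the leak with a purely counting-based (not algebraic) obstruction and with blocks that are large and \emph{disjoint}, not constant-size and nested. It takes diagonal blocks $F_i=R_i\times C_i$ with $|R_i|=|C_i|=x_i=\left\lfloor\frac13\sqrt{n/i}\right\rfloor$ for $i\le n_0=\lfloor n/9\rfloor$; then $\sum_i x_i^2=\Theta(n\ln n)$, so the union of the blocks has enough area to contain, \emph{exclusively}, a set $B$ of $\Theta(\ln n)$ symbols each occurring $n$ times --- this is what forces any transversal of size $n-\frac{1}{42}\ln n$ to enter the blocks, with $\sum_i|T\cap F_i|\ge\frac1{42}\ln n$. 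The cost of entering is paid by a second symbol class: the ``arms'' $H_i\cup J_i$ (the remainder of the rows $R_i$ and columns $C_i$) have area just over $n(2x_i-1)$ --- this uses the bound $x_t\sum_{i\le t}x_i\le n/4$, the other half of the choice of $x_i$ --- and are filled almost exactly by a set $N_i$ of $2x_i-1$ symbols occurring nowhere else. Since $s$ transversal cells in $F_i$ use up $s$ rows of $R_i$ and $s$ columns of $C_i$, the transversal meets $H_i\cup J_i$ in at most $2x_i-2s$ cells, hence omits at least $2s-1$ symbols of $N_i$; summing over $i$ (the $N_i$ are disjoint, so the deficiencies add without double-counting) yields at least $\sum_i|T\cap F_i|\ge\frac1{42}\ln n$ omitted symbols, a contradiction. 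This trade-off between two symbol classes, with block sizes tuned so that $\sum_i x_i^2$ diverges logarithmically while $x_t\sum_{i\le t}x_i$ stays bounded, is the key idea your outline is missing.
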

We remark that a corollary of the above theorem is that  Erd\H{o}s and Spencer's result cannot be strengthened to hold when the number of occurrences of each symbol in the array is ``$\leq n-\frac{1}{85}\ln n$''  rather than ``$\leq (n-1)/16$''. To see this consider the $n\times n$ array  $S$ from Theorem~\ref{Theorem_Counterexamples} and append $\left \lfloor\frac{1}{84}\ln n \right\rfloor$ rows and columns  to obtain a new array $S'$. Fill the newly created entries with (arbitrarily many) previously unused symbols using every symbol $\leq n$ times. Now $S'$ is an $n'\times n'$ array for $n'=n+\left \lfloor\frac{1}{84}\ln n \right\rfloor$ with $\leq n\leq n'-\frac{1}{85}\ln n'$ occurrences of each symbol (using that $n$ is sufficiently large). It cannot have a size $n'$ transversal, since such a transversal would intersect $S$ in at least $n'-2(n'-n)\geq n -\frac{1}{42}\ln n$ entries, and $S$ was chosen to have no transversal of size $n -\frac{1}{42}\ln n$.

The above theorem is proved in the next section.
These counterexamples still leave open the possibility of Stein's Conjecture holding in some asymptotic sense.
In Section~\ref{SectionRemarks} we discuss some possible asymptotic versions of it.

\section*{Proof}
Our proof relies on the fact that the sequence $a_t=\frac{1}{\sqrt{t}}$ has the property that $b_t=a_t\sum_{i=1}^{t} a_i$ converges as $t\to \infty$ while $c_t=\sum_{i=1}^{t} a_i^2$ diverges. The following lemma proves this in a way which will be convenient to apply. 
\begin{lemma}\label{Lemma_Sequence}
For an integer $n\geq 10^{60}$, consider the sequence $x_t=\left\lfloor \frac{1}{3}\sqrt{\frac{n}{t}}\right \rfloor$. The following hold for all $t$.
\begin{align}
   x_t \sum_{i=1}^t x_i&\leq \frac {n}4 \label{P1}\\
\sum_{i=1}^{n} x_i^2&\geq  \frac{n\ln n}{10} \label{Squares}
\end{align}
\end{lemma}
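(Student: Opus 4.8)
The plan is to prove each inequality by comparing the sum $\sum x_i$ to an integral, being careful about the floor function. The key elementary facts are that $\lfloor y \rfloor \le y$ always, and that $\lfloor y \rfloor \ge y/2$ whenever $y \ge 1$; the hypothesis $n \ge 10^{60}$ is there precisely to guarantee that $\frac13\sqrt{n/t} \ge 1$ over the relevant range of $t$ (namely $t \le n/9$), and to absorb the error terms coming from the endpoints of the integral comparison.

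For inequality~\eqref{P1}, I would first bound $\sum_{i=1}^t x_i \le \sum_{i=1}^t \frac13\sqrt{n/i} = \frac{\sqrt n}{3}\sum_{i=1}^t i^{-1/2}$, and then use $\sum_{i=1}^t i^{-1/2} \le 1 + \int_1^t u^{-1/2}\,du = 1 + 2(\sqrt t - 1) \le 2\sqrt t$. This gives $\sum_{i=1}^t x_i \le \frac{2\sqrt{nt}}{3}$. Multiplying by $x_t \le \frac13\sqrt{n/t}$ yields $x_t\sum_{i=1}^t x_i \le \frac13\sqrt{n/t}\cdot\frac23\sqrt{nt} = \frac{2n}{9} \le \frac n4$. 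A mild subtlety: when $t > n/9$ we have $x_t = 0$, so the bound holds trivially; and for $t \le n/9$ the estimate $\sum i^{-1/2}\le 2\sqrt t$ should be checked to be comfortable enough (it is, since $2\sqrt{nt}/3$ already has slack). So \eqref{P1} is the easy direction.

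For inequality~\eqref{Squares}, the harder direction, I would restrict the sum to the range $t \le n/9$ where $\frac13\sqrt{n/t} \ge 1$, use $x_t \ge \frac16\sqrt{n/t}$ there (from $\lfloor y\rfloor \ge y/2$ for $y \ge 1$), hence $x_t^2 \ge \frac{n}{36t}$, and sum: $\sum_{i=1}^n x_i^2 \ge \sum_{i=1}^{\lfloor n/9\rfloor} \frac{n}{36 i} \ge \frac{n}{36}\bigl(\ln(n/9) - 1\bigr)$, using $\sum_{i=1}^m 1/i \ge \ln m$. Since $n \ge 10^{60}$, we have $\ln(n/9) - 1 = \ln n - \ln 9 - 1 \ge \ln n - 4 \ge \frac{36}{10}\ln n\cdot\frac{1}{?}$ — more carefully, one wants $\frac{1}{36}(\ln n - 4) \ge \frac{1}{10}\ln n$, i.e. $\frac{1}{36}\ln n - \frac{1}{10}\ln n \ge \frac{4}{36}$, i.e. $\ln n\left(\frac{1}{36}-\frac{1}{10}\right) \ge \frac19$; but $\frac{1}{36} - \frac{1}{10} < 0$, so this particular bookkeeping fails and one needs the constant $\frac16$ replaced by something better, or a sharper lower bound on $x_t$.

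The fix, and the step I expect to be the main obstacle, is getting a good enough constant: one should not use $\lfloor y \rfloor \ge y/2$ wholesale but rather split off the range where $\frac13\sqrt{n/t}$ is large (say $\ge 100$), where $\lfloor y\rfloor \ge 0.99\, y$ and so $x_t^2 \ge 0.98\cdot\frac{n}{9t}$, giving $\sum x_i^2 \ge \frac{0.98}{9}\sum_{i\le n/90000} \frac ni \ge \frac{0.98}{9} n\bigl(\ln n - \ln 90000\bigr)$, and then $\frac{0.98}{9}(\ln n - 12) \ge \frac{1}{10}\ln n$ holds comfortably once $\ln n$ is large, which it is since $n \ge 10^{60}$ gives $\ln n \ge 138$. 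This is really just a matter of choosing the threshold and tracking constants so the $\frac{1}{10}$ comes out with room to spare; the hypothesis $n \ge 10^{60}$ is generous enough that essentially any reasonable splitting works, so the ``obstacle'' is purely one of arranging the numerology cleanly rather than any genuine difficulty.
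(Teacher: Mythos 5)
Your treatment of \eqref{P1} is exactly the paper's argument and is correct: bound $x_i\le\frac13\sqrt{n/i}$, compare $\sum_{i=1}^t i^{-1/2}$ with $\int_1^t u^{-1/2}\,du$ to get $\sum_{i=1}^t x_i\le\frac{2\sqrt{nt}}{3}$, and multiply by $x_t\le\frac13\sqrt{n/t}$ to obtain $\frac{2n}{9}\le\frac n4$ (no case split on $t>n/9$ is even needed, since the bound $\frac{2n}{9}$ holds for every $t$).

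For \eqref{Squares}, however, your closing numerical step is false at the stated cutoff. The inequality $\frac{0.98}{9}(\ln n-12)\ge\frac{1}{10}\ln n$ is equivalent to $0.8\ln n\ge 117.6$, i.e.\ $\ln n\ge 147$; but $n\ge 10^{60}$ only guarantees $\ln n\ge 60\ln 10\approx 138.2$, at which point the left side is about $13.74$ and the right side about $13.82$. Nor is this cured by tuning the threshold: cutting at $\frac13\sqrt{n/t}\ge K$ keeps only $i\le n/(9K^2)$ and costs a factor $(1-1/K)^2$, so (with your bound $\sum_{i\le m}1/i\ge\ln m$) the total loss against $\frac{n\ln n}{9}$ is roughly $\frac n9\bigl(\frac{2\ln n}{K}+2\ln K+\ln 9\bigr)$, which is minimized near $K=\ln n$ at about $1.54n$--$1.56n$, while the allowance $\frac{n\ln n}{9}-\frac{n\ln n}{10}=\frac{n\ln n}{90}$ is only about $1.54n$ at $n=10^{60}$. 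So the single-threshold scheme falls just short for every $K$; the ``numerology'' is genuinely tight here, not merely untidy. The paper avoids this by using the additive bound $x_i\ge\frac13\sqrt{n/i}-1$ on the \emph{entire} range $1\le i\le n$, which gives $x_i^2\ge\frac{n}{9i}-\frac23\sqrt{n/i}$ (valid even where the floor is $0$, as the right side is then negative); the total error $\frac23\sum_{i=1}^n\sqrt{n/i}\le\frac43n$ follows from the same integral estimate as in \eqref{P1}, and $\frac{n\ln n}{9}-\frac43n\ge\frac{n\ln n}{10}$ reduces to $\ln n\ge 120$, which $n\ge10^{60}$ does supply. (Alternatively, your argument can be rescued by also counting the discarded range $n/(9K^2)<i\le n/9$, where $x_i\ge1$ contributes roughly another $n/9$, or by using the sharper $\sum_{i\le m}1/i\ge\ln m+\frac12$; but the additive bound is the clean fix.)
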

\begin{proof}
We'll use the fact that for the decreasing function $f(x)$ we have $\int_a^{b} f(x) dx\leq \sum_{i=a}^b f(i)\leq \int_{a-1}^b f(x) dx$.
This implies 
\begin{align}
\frac {1}{3 } \sum_{i=1}^t\sqrt{\frac{n}{i}}
=\frac {\sqrt n}{3 }\left( 1 +   \sum_{i=2}^t\frac{1}{\sqrt i}\right)  
&\leq \frac {\sqrt n}{3 }\left(1 +  \int_{1}^t\frac{1}{\sqrt {x}} dx \right) 
=\frac {\sqrt n}{3 } \left(1 +  2\left(\sqrt{t}-1 \right)\right)
\leq \frac {2\sqrt {n t}}3  \label{SumEstimate}
\end{align}
Now (\ref{P1}) comes from using (\ref{SumEstimate}) and $x_t\leq \frac{1}{3}\sqrt{\frac{n}{t}}$ to get 
$$x_t \sum_{i=1}^t x_i\leq \frac{1}{3}\sqrt{\frac{n}{t}} \left(\sum_{i=1}^t \frac {1}{3 }\sqrt{\frac{n}{i}} \right) \leq \frac{1}{3}\sqrt{\frac{n}{t}} \cdot\frac {2\sqrt{nt}}3\leq \frac n4$$
 For (\ref{Squares}) we have
$$
 \sum_{i=1}^{n} x_i^2
 =\sum_{i=1}^{n} \left\lfloor \frac{1}{3}\sqrt{\frac{n}{i}}\right \rfloor^2
\geq  \frac{n}{9}\sum_{i=1}^{n} \frac{1}{i} -  \frac23\sum_{i=1}^{n} \sqrt{\frac{n}{i}}
 \geq  \frac{n}{9} \int_{1}^{n}\frac{1}{x} dx -  \frac {4n}3 
 = \frac{n\ln n}{9} -  \frac {4n}3\geq  \frac{n\ln n}{10}
$$
The first inequality comes from $\left\lfloor \frac{1}{3}\sqrt{\frac{n}{i}}\right \rfloor\geq \frac{1}{3}\sqrt{\frac{n}{i}}-1$.
The second inequality uses (\ref{SumEstimate}) and the fact that decreasing functions have $\int_a^{b} f(x) dx\leq \sum_{i=a}^b f(i)$.
The last inequality uses $1\leq \frac{\ln n}{120}$ which holds for $n\geq 10^{60}$.
\end{proof}

\begin{figure}
  \centering
    \includegraphics[width=1\textwidth]{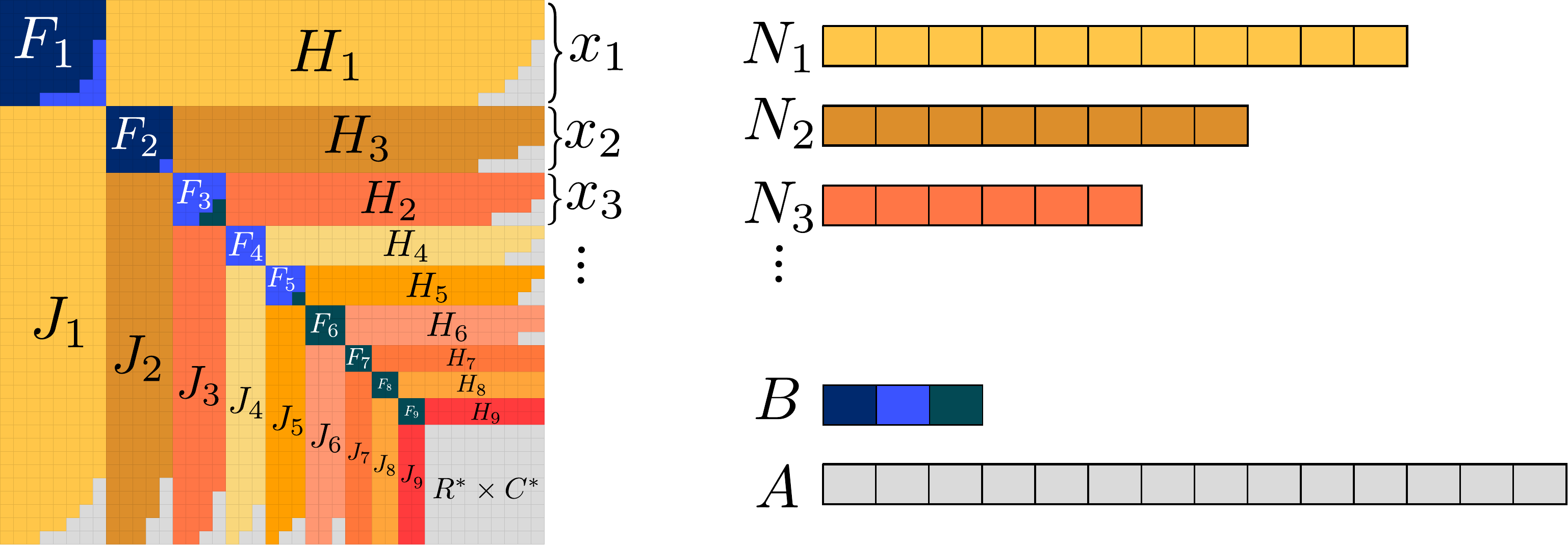}
  \caption{An illustration of a counterexample to Conjecture~\ref{Conjecture_Stein}. The colours in the array represent symbols in the array $S$. For clarity we use the same colour for symbols in each of $N_1, \dots, N_n,$ and $A$. The picture is not entirely to scale since in the actual array in Theorem~\ref{Theorem_Counterexamples}, $R^*\times C^*$ takes up a far larger proportion of the square.}
\label{FigureCounterexample}
\end{figure}
Now we construct counterexamples to Conjecture~\ref{Conjecture_Stein}.
\begin{proof}[Proof of Theorem~\ref{Theorem_Counterexamples}.]
Let $n\geq  10^{60}$, and consider an $n\times n$ array with rows $r_1, \dots, r_n$, and columns $c_1,\dots, c_n$. 
For a set of rows $R$ and a set of columns $C$, we denote the rectangle formed by $R$ and $C$ by  $R\times C=\{(r_i, c_j): r_i\in R, c_j\in C\}$. Let $x_i$ be the sequence from Lemma~\ref{Lemma_Sequence}. Let $n_0$ be the largest number for which $x_{n_0}\neq 0$, and notice that $n_0=\left\lfloor\frac n 9\right\rfloor$. From (\ref{P1}) and the integrality of $x_i$, we have $\sum_{i=1}^{n_0}x_i\leq  \frac{n}{4 x_{n_0}}\leq \frac n4$.
Partition $\{r_1, \dots, r_n\}$ into sets $R_1\cup \dots\cup R_{n_0}\cup R^*$  and $C$ into sets $C_1\cup \dots \cup C_{n_0}\cup C^*$  with $|R_i|=|C_i|=x_i=\left\lfloor \frac{1}{3}\sqrt{\frac{n}{i}}\right \rfloor$ and $|R^*|=|C^*|=n-\sum_{i=1}^{n_0}{x_i}\geq  3n/4$. 
Let $F_i=R_i\times C_i$, $H_i= R_i\times( C^*\cup\bigcup_{j> i} C_j)$,  and   $J_i=(R^*\cup \bigcup_{j> i} R_j)\times C_i$.  Notice that $S$ is the disjoint union of the sets  $F_i, H_i, J_i$ for $i=1, \dots, n_0$, and $R^*\times C^*$.

Notice that $|F_i|=|R_i||C_i|=  x_i^2$.  Using (\ref{P1}) we have
\begin{align*}
|H_i|= |J_i|=|R_i|(n-\sum_{j\leq i} |C_j|)
= x_in - x_i \sum_{j\leq i} x_j
\geq n\left(x_i - \frac14 \right).
\end{align*}
In particular this means that $|H_i\cup J_i|> n(2x_i -1)$.

We now fill $S$ with the $n$ symbols $1, \dots, n$ so that each symbol occurs exactly $n$ times. First split  $\{1, \dots, n\}$  into sets $N_1, \dots, N_{n_0}$ with $|N_i|=2x_i-1$, a set $B$ with $|B|=\left \lfloor \frac{1}{20}\ln n \right\rfloor\geq \frac{1}{21}\ln n$, and  a set $A$ with $|A|=n-|B|-|N_1|-\dots-|N_{n_0}|$ (to see that such a partition is possible, notice that from $\sum_{i=1}^{n_0}x_i\leq  \frac n4$ and $n\geq 10^{60}$, we have $|B|+|N_1|+\dots+|N_{n_0}|=  \left \lfloor \frac{1}{20}\ln n \right\rfloor+ \sum_{i=1}^{n_0} (2x_i-1)\leq \frac{1}{20}\ln n +\frac{n}{2}<n$). 
Fill $S$ as follows
\begin{itemize}
\item For each symbol in $N_i$, place it $n$ times into  $H_i\cup J_i$  ($|H_i\cup J_i|> n|N_i|$ ensures that this is possible).
\item For each symbol in $B$, place it $n$ times into $F_1\cup \dots \cup F_{n_0}$. This is possible since (\ref{Squares}) implies $\sum_{i=1}^{n_0} x_i^2=\sum_{i=1}^{n} x_i^2\geq \frac1{10}{n\ln n}\geq  n|B|$.
\item Place the symbols from $A$ arbitrarily into the remaining entries of $S$ (making sure that there are exactly $n$ occurances of each symbol).
\end{itemize}

Suppose, for the sake of contradiction, that we have a partial transversal $T$ of size  $> n -\frac{1}{42}\ln n$. 
\begin{claim}
If $T$ contains $s$ entries in $F_i$, then $T$ contains at most $2x_i - 2s$ entries in $H_i\cup J_i$
\end{claim}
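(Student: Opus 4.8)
The plan is to count, inside the index-$i$ block, how many rows and columns a transversal is allowed to use. Recall $F_i = R_i \times C_i$ with $|R_i| = |C_i| = x_i$, and $H_i = R_i \times (C^* \cup \bigcup_{j>i} C_j)$, $J_i = (R^* \cup \bigcup_{j>i} R_j) \times C_i$. The first observation is that every entry of $F_i \cup H_i$ lies in a row of $R_i$, and every entry of $F_i \cup J_i$ lies in a column of $C_i$; moreover $F_i$ is exactly the set of entries lying in both a row of $R_i$ and a column of $C_i$. Since $T$ is a partial transversal, it uses each row and each column at most once, so it has at most $x_i = |R_i|$ entries in $F_i \cup H_i$ and at most $x_i = |C_i|$ entries in $F_i \cup J_i$.

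Now suppose $T$ has exactly $s$ entries in $F_i$. Each such entry consumes one row of $R_i$ and one column of $C_i$. The remaining entries of $T$ in $H_i$ must use rows of $R_i$ not already used by the $F_i$-entries, of which there are at most $x_i - s$; hence $T$ has at most $x_i - s$ entries in $H_i$. Symmetrically, $T$ has at most $x_i - s$ entries in $J_i$. Adding these two bounds gives at most $2x_i - 2s$ entries of $T$ in $H_i \cup J_i$, which is exactly the claim. (Here one uses that $H_i$, $J_i$, $F_i$ are pairwise disjoint, so these are genuinely different entries being counted.)

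I expect no real obstacle here: the argument is a bookkeeping exercise with rows and columns, using only the defining disjointness structure of the blocks and the fact that a partial transversal is a partial matching in both the row-coordinate and the column-coordinate. The one point that needs a moment's care is making sure the disjointness $H_i \cap J_i = \varnothing$ is invoked so that the two bounds $|T \cap H_i| \le x_i - s$ and $|T \cap J_i| \le x_i - s$ can be summed without double-counting; this follows since an entry of $H_i$ has its column in $C^* \cup \bigcup_{j>i} C_j$ while an entry of $J_i$ has its column in $C_i$, and these column sets are disjoint. The symbol structure of the filling plays no role in this particular claim — it will be used afterwards, once the claim is combined with the count on symbols from $B$ appearing only in $\bigcup F_i$.
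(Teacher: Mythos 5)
Your proof is correct and follows essentially the same argument as the paper: the $s$ entries of $T\cap F_i$ occupy $s$ of the $x_i$ rows of $R_i$ and $s$ of the $x_i$ columns of $C_i$, forcing $|T\cap H_i|\le x_i-s$ and $|T\cap J_i|\le x_i-s$, which sum to the claimed bound. Your extra remark verifying $H_i\cap J_i=\varnothing$ is a fine (if already implicit) sanity check; nothing is missing.
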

\begin{proof}
Suppose that $(r_{a_1}, c_{b_1}), \dots, (r_{a_s}, c_{b_s})\in T\cap F_i$. Then since $T$ is a transversal, $T$ cannot have any other entries in rows $r_{a_j}$ or columns $c_{b_j}$ for $j=1, \dots, s$.
Recall that $H_i$ and $F_i$ are both contained in the $x_i$ rows $R_i$, which implies $r_{a_1}, \dots, r_{a_s}\in R_i$. This means that $H_i\cap T$ must be contained in the $x_i-s$ rows $R_i\setminus\{r_{a_1}, \dots, r_{a_s}\}$. Since $T$ has at most one entry in each row we have $|T\cap H_i|\leq x_i-s$. By the same argument we have $|T\cap J_i|\leq x_i-s$.
\end{proof}

Since $|B|\geq  \frac{1}{21}\ln n$, $T$ must contain at least $\frac{1}{42}\ln n$ of the symbols of $B$. Letting $z_i=|T\cap F_i|$, we have $\sum_{i=1}^{n_{0}}z_i\geq \frac{1}{42}\ln n$. By the claim, for all $i$, $T$ has at most $2x_i-2z_i$ entries in $H_i\cup J_i$, and so uses at most $2x_i-2z_i$ symbols in $N_i$ (since the symbols in $N_i$ only occur in $H_i\cup J_i$). Since $|N_i|=2x_i-1$, this means that $T$ doesn't have any entries of at least $|N_i|-(2x_i-2z_i)=2z_i-1$ symbols of $N_i$. Summing up, we have that $T$ doesn't use at least $\sum_{i=1}^{n_{0}}\min(2z_i-1,0)\geq \sum_{i=1}^{n_{0}}z_i\geq \frac{1}{42}\ln n$ symbols. This contradicts  $|T|> n -\frac{1}{42}\ln n$.
\end{proof}

\section*{Concluding remarks}\label{SectionRemarks}
The counterexamples constructed in this note still leave the possibility of Stein's Conjecture holding in some asymptotic sense. There are two natural asymptotic weakenings of the conjecture which may still be true.
\begin{itemize}
\item
{In the setting of Stein's Conjecture,  is there  always a size $n-o(n)$ partial transversal?} This would strengthen the results of  Stein~\cite{Stein} and of Aharoni, Berger, Kotlar, and Ziv \cite{ABKZ}.

It is possible to show that  an asymptotic version of Stein's Conjecture holds with a mild additional condition on the square --- that no symbol appears too often in a row or column. To prove this we will use the following version of R\"odl's Nibble (see eg. \cite{AlonSpencer}).
\begin{theorem}[R\"odl]\label{RodlNibble}
Fix $\epsilon>0$, $r\in \mathbb N$, the following holds for sufficiently large $n$ and $d$.
Let $\mathcal{H}$ be an $r$-uniform, $d$-regular, $n$-vertex hypergraph with every pair of vertices $u,v$ having $d(u,v)\leq o(n)$. Then $\mathcal H$ has a  matching with $(1-\epsilon)n/r$ edges.
\end{theorem}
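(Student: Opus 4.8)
The plan is to prove this by the \emph{semi-random} (``nibble'') method, which is the standard route to near-perfect matching results of this type and is essentially the proof in~\cite{AlonSpencer}. The matching is built greedily over a bounded number of rounds; each round selects a sparse random set of edges, extracts a matching from it, deletes the covered vertices, and then one argues that the leftover hypergraph is still almost regular with small codegrees, so the procedure can be repeated.

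\textbf{One nibble and concentration.} Suppose at some stage we have a hypergraph $\mathcal{H}'$ on $N\le n$ vertices that is $(1\pm o(1))D$-regular with all codegrees $o(D)$ (initially $D=d$; for the first round the hypothesis $d(u,v)=o(n)$ supplies this in the regime $d=\Omega(n)$ which is the one of interest). Fix a small constant $\gamma=\gamma(\epsilon,r)$ and put each edge of $\mathcal{H}'$ into a random set $M$ independently with probability $\gamma/D$; let $M'\subseteq M$ be the edges of $M$ that meet no other edge of $M$, so $M'$ is a matching. First-moment and Poisson-type estimates (the codegree bound makes the events ``$e\in M$'' and ``$e$ touches another edge of $M$'' nearly independent across $e$) show that a fixed vertex $v$ is covered by $M'$ with probability $q:=\gamma e^{-r\gamma}+o(1)$ and that in the residual hypergraph $\mathcal{H}''=\mathcal{H}'\setminus V(M')$ its expected degree is $(1+o(1))D(1-q)^{r-1}$, essentially uniformly in $v$; codegrees only decrease. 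Since each such statistic is a function of the independent edge-indicators that changes by $O(1)$ or by a codegree $=o(D)$ when one indicator is flipped, Azuma's (or Talagrand's) inequality gives concentration within $o(D)$ of the mean with probability $1-e^{-N^{\Omega(1)}}$ (this needs $d$, hence $D$, large), and a union bound over the $N$ vertices shows that, after moving the $o(N)$ ``bad'' vertices whose degree deviated into the uncovered pile, $\mathcal{H}''$ is again $(1\pm o(1))D'$-regular with $D'=(1+o(1))D(1-q)^{r-1}$ and codegrees $o(D')$.

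\textbf{Iteration.} Starting from $D_0=d$, iterate the nibble $T=T(\epsilon,r)$ times. The degree parameter obeys $D_{t+1}=(1+o(1))D_t(1-q)^{r-1}$, so it shrinks by a constant factor per round but stays polynomially large in $n$ over a bounded number of rounds (again using $d$ large), which keeps all the tail bounds valid. The expected fraction of vertices still uncovered after $T$ rounds is at most $(1-q)^{rT}+o(1)$ plus the compounded bad-vertex losses, and this is below $\epsilon$ once $T$ is large enough; so with high probability at least $(1-\epsilon)n$ vertices are covered, and the union of the $T$ nibble matchings is the desired matching with at least $(1-\epsilon)n/r$ edges.

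\textbf{Main obstacle.} The technical heart is the concentration bookkeeping: one must choose the martingale exposure so that \emph{all} $O(n)$ degree and codegree statistics are controlled simultaneously, track how the $o(D)$ errors accumulate across the $T$ rounds so that ``almost regular with small codegrees'' persists as an invariant, and ensure the residual degree never drops below the threshold the tail bounds require --- which is precisely why the theorem quantifies over ``sufficiently large $n$ \emph{and} $d$''. Everything else is routine.
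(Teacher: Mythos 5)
The paper does not actually prove Theorem~\ref{RodlNibble}: it is quoted as a known black-box result (R\"odl's nibble, in the Pippenger--Spencer form) with a pointer to \cite{AlonSpencer}, and only its application in Corollary~\ref{CorollaryAsymptotic} is carried out here. Your outline is the standard semi-random proof, i.e.\ essentially the argument of the cited reference, and the expectation computations ($q=\gamma e^{-r\gamma}+o(1)$ for the covering probability, residual degree $(1+o(1))D(1-q)^{r-1}$, the need for codegrees $o(d)$ rather than merely $o(n)$) are all correct. One concrete caution about the step you rightly call the technical heart: the bounded-differences claim as written --- that each degree statistic changes by $O(1)$ or by a codegree when a single edge indicator is flipped --- is false in the worst case, since deselecting one edge of $M$ can simultaneously change the isolated/non-isolated status of all $\Theta(rD)$ edges meeting it, so a naive application of Azuma over the edge-exposure filtration gives nothing. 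The standard repairs are Talagrand's inequality with a suitable certificate, a more careful exposure order, or (as in Alon--Spencer) plain second-moment/Chebyshev estimates showing only that all but $o(N)$ vertices have degree within $o(D)$ of the mean --- which, as you already note, suffices because the bad vertices can simply be declared uncovered and absorbed into the $\epsilon n$ loss. With that repair your outline is the correct and standard proof of the quoted theorem.
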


Using the above result we can prove an asymptotic version of Stein's Conjecture when no symbol appears too often in a row or column.
\begin{corollary}\label{CorollaryAsymptotic}
Fix $\epsilon>0$. Let $S$ be an $n\times n$ array filled with the symbols $1, \dots, n$ so that each number occurs exactly $n$ times in the square, and at most $o(n)$ times in every row and column.
Then $S$ has a partial transversal of size $(1-\epsilon)n$. 
\end{corollary}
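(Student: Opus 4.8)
The plan is to encode partial transversals as matchings in an auxiliary $3$-uniform hypergraph and then apply R\"odl's Nibble (Theorem~\ref{RodlNibble}). Concretely, I would build a hypergraph $\mathcal{H}$ on the vertex set $V=\{r_1,\dots,r_n\}\cup\{c_1,\dots,c_n\}\cup\{1,\dots,n\}$, whose three (disjoint) parts are the rows, the columns, and the symbols of $S$, so that $|V|=3n$. For every cell $(r_i,c_j)$ of $S$, if that cell contains the symbol $s$, add the edge $\{r_i,c_j,s\}$ to $\mathcal{H}$. Since rows, columns and symbols come from disjoint universes, each such triple is a genuine $3$-set, and distinct cells give distinct edges (they differ in the row- or column-coordinate), so $\mathcal{H}$ has exactly $n^2$ edges and is $3$-uniform. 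Two edges $\{r_i,c_j,s\}$ and $\{r_{i'},c_{j'},s'\}$ are vertex-disjoint exactly when $r_i\neq r_{i'}$, $c_j\neq c_{j'}$ and $s\neq s'$; hence a matching in $\mathcal{H}$ is precisely a set of cells of $S$ no two of which share a row, a column or a symbol, i.e.\ a partial transversal. So it suffices to produce a matching of size $(1-\epsilon)n$ in $\mathcal{H}$.

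Next I would verify the hypotheses of Theorem~\ref{RodlNibble} with $r=3$ and with $N:=|V|=3n$ playing the role of the ``$n$'' in that statement. Each row $r_i$ lies in exactly $n$ edges (one per cell of row $r_i$), each column $c_j$ likewise lies in exactly $n$ edges, and each symbol $s$ lies in exactly $n$ edges because $s$ occurs exactly $n$ times in $S$; therefore $\mathcal{H}$ is $d$-regular with $d=n$. For the codegrees: two rows, two columns, or two symbols never lie in a common edge (an edge meets each part in exactly one vertex), so those codegrees vanish; a row and a column lie in exactly one common edge, the cell in that row and column; and a row $r_i$ and a symbol $s$ lie in a common edge once for each cell of row $r_i$ carrying symbol $s$, which by hypothesis happens at most $o(n)$ times, and symmetrically for a column and a symbol. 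Thus every codegree is $o(n)=o(N)$, as required.

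Finally I would invoke Theorem~\ref{RodlNibble} with the given $\epsilon$ and $r=3$: for all sufficiently large $n$ (so that $N=3n$ and $d=n$ are large enough), $\mathcal{H}$ contains a matching with at least $(1-\epsilon)N/3=(1-\epsilon)n$ edges, and translating this matching back yields a partial transversal of $S$ of size $(1-\epsilon)n$. I do not expect a genuine obstacle here; the only place the extra hypothesis of the corollary is used is in bounding the row--symbol and column--symbol codegrees, and this is exactly why the ``at most $o(n)$ occurrences in every row and column'' condition is needed --- without it those two codegrees could be as large as $\Theta(n)$ and R\"odl's Nibble would not be applicable. The mild care points are merely bookkeeping: checking that edges are honest $3$-sets and that the cells-to-edges map is a bijection, and noting that the relevant parameters $3n$ and $n$ both tend to infinity.
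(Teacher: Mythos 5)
Your proposal is correct and follows essentially the same route as the paper: the same $3$-partite auxiliary hypergraph on rows, columns, and symbols, the same verification that it is $3$-uniform, $n$-regular on $3n$ vertices with all codegrees at most $o(n)$ (using the row/column occurrence hypothesis exactly where the paper does), and the same application of Theorem~\ref{RodlNibble} to extract a matching of size $(1-\epsilon)n$ corresponding to the desired partial transversal.
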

\begin{proof}
We define a $3$-uniform, $3$-partite hypergraph $\mathcal H$ as follows. The vertex set of $\mathcal H$ is $\{r_1, \dots, r_n, c_1, \dots, c_n, s_1, \dots, s_n\}$. The edges of $\mathcal H$ are exactly triples of the form $\{r_i, c_j, s_k\}$ with the $(i,j)th$ entry of $S$ being $k$. We claim that $\mathcal H$ satisfies the assumptions of Theorem~\ref{RodlNibble}. Notice that $\mathcal{H}$ is a $3$-uniform, $n$-regular, and $3n$-vertex hypergraph. For a pair of vertices $u,v$ we have $d(u,v)\leq 1$ unless one of $u,v$ is in $\{r_1, \dots, r_n, c_1, \dots, c_n\}$ and the other in  $\{s_1, \dots, s_n\}$. Notice that $d(r_i, s_j)$ and $d(c_i, s_j)$ are equal to the number of occurances of symbol $j$ in row $i$ and column $i$ respectively. By assumption, both of these quantities are at most $o(n)$. 

By Theorem~\ref{RodlNibble}, $\mathcal H$ has a matching $M$ with $(1-\epsilon)n$ edges. Let $T$ be the set of entries in $S$ corresponding to the edges of $M$. Notice that $T$ doesn't have more than one entry with any row, column or symbol because $M$ doesn't have more than one edge through any vertex. Thus $T$ is the required partial transversal.
\end{proof}
The above corollary shows that an asymptotic version of Stein's Conjecture holds when no symbol is repeated more than $o(n)$ times in any row or column. It is easy to check that in the arrays constructed in Theorem~\ref{Theorem_Counterexamples}, each symbol is repeated $O(\sqrt{n})$  in every row or column. 
Thus Corollary~\ref{CorollaryAsymptotic} applies to the squares constructed in Theorem~\ref{Theorem_Counterexamples} to show that they have partial transversals of size $n-o(n)$.  It would be interesting to know it this holds without any restriction on symbol repetitions i.e. if Corollary~\ref{CorollaryAsymptotic} holds without the ``each number occurs at most $o(n)$ times in every row and column'' condition.

\item
{What is the largest $\alpha$ so that every $n\times n$ square with $\leq \alpha n$ occurances of every symbol has a size $n$ transversal?} 
Erd\H{o}s and Spencer~\cite{ErdosSpencer} proved that $\alpha=1/16$ suffices, but it is likely that $\alpha$ can be as large as $1-o(1)$. Again, one can ask an easier question by  adding an extra condition forbidding symbol repetitions in rows and columns. Such a result is true and will be proved in \cite{MPS}:
\begin{theorem}[Montgomery, Pokrovskiy, Sudakov, \cite{MPS}]\label{Theorem_MPS}
Let $S$ be an $n\times n$ array filled with the symbols  so that each symbol occurs $\leq (1-o(1))n$ times in the square, and no symbol is repeated in a row or column. Then $S$ has a transversal.
\end{theorem}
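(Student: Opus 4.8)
The plan is to recast the statement in the language of properly edge-coloured complete bipartite graphs and then attack it with the absorption method. Identify the rows of $S$ with one part $X$ and the columns with the other part $Y$ of $K_{n,n}$, and colour the edge between $x_i$ and $y_j$ with the symbol lying in cell $(i,j)$. The hypothesis that no symbol is repeated in a row or column says exactly that this is a \emph{proper} edge-colouring, and ``each symbol occurs $\le (1-o(1))n$ times'' says exactly that every colour class is a matching of size at most $(1-o(1))n$ (note that there are necessarily more than $n$ colours, which only helps). A transversal of $S$ of size $n$ is precisely a rainbow perfect matching of $K_{n,n}$, so it suffices to prove: every proper edge-colouring of $K_{n,n}$ in which every colour is used at most $(1-o(1))n$ times admits a rainbow perfect matching.

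First I would build an \emph{absorbing structure}. Write the multiplicity bound as $(1-\epsilon)n$ with $\epsilon=\epsilon(n)\to 0$ and fix a parameter $\mu=\mu(n)\to 0$ tending to zero slowly relative to $\epsilon$. Reserve at random a set of about $\mu n$ rows, about $\mu n$ columns, and a palette of about $\mu n$ colours, together with a carefully embedded family of switching gadgets living on the reserved rows, columns and colours. These gadgets should be arranged, by a distributive-absorption argument in the style of Montgomery, so that the reserved structure is \emph{robustly matchable} in the following sense: whenever a partial rainbow matching $M$ of $K_{n,n}$ avoids the reserved rows, columns and colours and leaves uncovered a set $R$ of non-reserved rows and a set $C$ of non-reserved columns with $|R|=|C|\le \mu^{2}n$, the reserved structure together with $R$ and $C$ contains a rainbow perfect matching on exactly those rows and columns, using only colours not used by $M$. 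Constructing and embedding such an absorber is the technical heart of the argument: one first builds an auxiliary bipartite template that is itself robustly matchable, then realises each template edge as a short rainbow alternating path inside $S$, using a probabilistic/counting argument to produce the required extension edges (correct row, correct column, fresh colour, disjoint from previously placed gadgets) at every step. The count of such edges stays $\Theta(n)$ because each colour is missing from $\Omega(n)$ rows and columns, which is where the $(1-o(1))n$ slack is used.

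Next comes the \emph{main phase}, which is comparatively soft. Since no symbol repeats in a row or column, the codegrees in the auxiliary $3$-uniform hypergraph on rows, columns and symbols (as in the proof of Corollary~\ref{CorollaryAsymptotic}) are all at most $1$, so Theorem~\ref{RodlNibble}, applied after deleting the reserved rows, columns and colours, already yields a rainbow matching covering all but at most $\mu^{2}n$ of the non-reserved rows and columns; one should run a version of the nibble that keeps quantitative control of the leftover, and the number of admissible extension cells stays $\Theta(n)$ throughout because only $o(n)$ colours have been reserved. Finally, feed the leftover rows, the leftover columns, and the set of colours already used to the absorbing structure: by robust matchability it completes them to a rainbow perfect matching on the remaining rows and columns using fresh colours, and the union of this with the matching from the main phase is a rainbow perfect matching of $K_{n,n}$ --- equivalently, a transversal of $S$ of size $n$.

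The main obstacle is the absorber. Unlike results producing a merely \emph{large} partial transversal, where one is free to abandon a handful of awkward rows, a transversal must meet \emph{every} one of the $n$ rows and \emph{every} one of the $n$ columns, so the leftover of the nibble must be absorbed exactly, and the absorber has to be robust against both an adversarial colouring and an adversarial set of almost $n$ already-used colours. Arranging the switching gadgets to retain enough local freedom in this regime --- and controlling the small but positive density of usable extension edges throughout the recursive construction of the absorber --- is the delicate point, and it is exactly here that one cannot afford colour multiplicities as large as $n$: the $(1-o(1))n$ hypothesis is what guarantees, at every stage, a positive-density supply of colours that have not yet been exhausted.
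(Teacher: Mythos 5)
This theorem is not proved in the paper at all: it is quoted from the preprint \cite{MPS} (``Such a result is true and will be proved in \cite{MPS}''), so there is no in-paper argument to compare against. Your translation of the statement is correct --- a transversal of $S$ is exactly a rainbow perfect matching of a properly edge-coloured $K_{n,n}$ in which every colour class has size at most $(1-o(1))n$ --- and your overall strategy (a quantitative semi-random nibble to find an almost-spanning rainbow matching, preceded by setting aside a small random reservoir of rows, columns and colours carrying an absorbing structure that completes the matching) is indeed the kind of argument the cited work uses. The soft half of your plan is fine: since the colouring is proper, the auxiliary $3$-uniform hypergraph has codegrees at most $1$ and a Pippenger--Spencer-type nibble with explicit error control gives the almost-perfect rainbow matching on the non-reserved part.

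The gap is that the absorber, which you yourself identify as ``the technical heart'' and ``the main obstacle,'' is never actually constructed. Everything difficult in the theorem is concentrated there: you must exhibit concrete switching gadgets, embed $\Theta(n)$ of them greedily or randomly into an adversarially coloured $K_{n,n}$ while keeping them edge- and colour-disjoint, and then prove robust matchability against \emph{any} leftover set of $\le\mu^2 n$ rows and columns and any set of already-used colours of size close to $n$ --- noting that when every colour class has size exactly $(1-\epsilon)n$ there are only about $(1+\epsilon)n$ colours in total, so only $O((\epsilon+\mu)n)$ colours remain unused after the main phase and the accounting of reserved versus fresh colours is genuinely tight. Asserting that this can be done ``by a distributive-absorption argument in the style of Montgomery'' with ``a probabilistic/counting argument'' names the right toolbox but does not supply the construction, the counting of admissible extension edges at each step of the recursive template realisation, or the verification of robustness; without those, the proposal is a plausible plan rather than a proof. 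Since this is precisely the content that occupies the bulk of \cite{MPS}, you should either carry out the absorber construction in detail or cite the result rather than claim to prove it.
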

In  \cite{MPS} something stronger is actually shown ---  that under the assumptions of the above Theorem the square has $(1-o(1))n$ disjoint transversals.
Theorem~\ref{Theorem_MPS} shows that if we completely forbid symbol repetitions in rows and columns, then the asymptotic version of Stein's Conjecture holds. Again, it would be interesting to know it Theorem~\ref{Theorem_MPS} holds without any restrictions on repetitions in rows and columns.

\item After Stein made his conjecture, many authors have proposed strengthenings and variations of Conjecture~\ref{Conjecture_Stein}. The construction in this note can be used to disprove most of these conjectures as well. Sometimes this is immediate e.g. the conjecture in \cite{Denes} and Conjecture~1.9 in \cite{AABCKLZ} are direct strengthenings Conjecture~\ref{Conjecture_Stein} and so are false by Theorem~\ref{Theorem_Counterexamples}. Sometimes one needs to modify our construction slightly to disprove related conjectures. For example, Hahn suggested the following conjecture.
\begin{conjecture}[Hahn, \cite{HahnThomassen}]\label{Conjecture_Hahn}
In every edge-colouring of $K_n$ with $\leq n/2-1$ edges of each colour, there is a rainbow path of length $n-1$.
\end{conjecture}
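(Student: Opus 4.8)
The plan is to \emph{disprove} Conjecture~\ref{Conjecture_Hahn}, by bootstrapping off a strengthening of Theorem~\ref{Theorem_Counterexamples}. The idea is that an edge-colouring of $K_n$ behaves like a \emph{symmetric} $n\times n$ array, so one should first re-run the construction of Theorem~\ref{Theorem_Counterexamples} symmetrically. Take $m$ large and even, use a single index set $V=\{1,\dots,m\}$ partitioned as $V_1\cup\dots\cup V_{n_0}\cup V^*$ with $|V_i|=x_i$, set $F_i=V_i\times V_i$, $H_i=V_i\times(V^*\cup\bigcup_{j>i}V_j)$, $J_i=H_i^{T}$, and fill the array symmetrically ($S(a,b)=S(b,a)$) with: the symbols of $B$ in $\bigcup_i F_i$, the symbols of $N_i$ (still with $|N_i|=2x_i-1$) in $H_i\cup J_i$, and $A$ in the rest, each symbol used exactly $m$ times. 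The feasibility checks are as before — in particular $m|N_i|<|H_i\cup J_i|=2x_i(m-\sum_{j\le i}x_j)$ reduces to $2x_i\sum_{j\le i}x_j<m$, which follows from~(\ref{P1}) — and the parity constraints of a symmetric filling are met because $m$ is even. This produces a symmetric equi-$m$-square $S$.

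The key lemma I would prove is: \emph{viewing the off-diagonal entries of $S$ as an edge-colouring of $K_m$ (edge $\{a,b\}$ coloured $S(a,b)$), $S$ contains no rainbow subgraph of maximum degree at most $2$ with more than $m-4$ edges.} Its proof copies the proof of Theorem~\ref{Theorem_Counterexamples}, with ``a partial transversal has at most one cell per row and per column'' replaced by ``a maximum-degree-$2$ subgraph has at most two cells in the rows and columns indexed by $V_i$, combined'': if such a (rainbow) subgraph $U$ has $s_i$ cells inside $F_i$, a slot-count gives $|U\cap H_i|\le 2x_i-2s_i$, so $U$ uses at most $2x_i-2s_i$ symbols of $N_i$ and hence avoids at least $(2x_i-1)-(2x_i-2s_i)=2s_i-1\ge s_i$ of them. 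Since $U$ has more than $m-4$ of the $m$ symbols it avoids at most $3$; but it uses at least $|B|-3$ symbols of $B$, all of which sit in $\bigcup_iF_i$, so $\sum_i s_i\ge |B|-3$, and therefore it avoids at least $\sum_i s_i\ge |B|-3$ symbols of $\bigcup_iN_i$, contradicting ``avoids at most $3$'' once $|B|=\lfloor\tfrac1{20}\ln m\rfloor\ge 7$, i.e. for $m$ large.

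Finally, build the counterexample on $K_n$ with $n=m+2$: colour $\{v_a,v_b\}$ by $S(a,b)$ for $a,b\le m$, and colour the $2m+1$ edges meeting $v_{m+1}$ or $v_{m+2}$ with five fresh colours. Every colour is then used at most $m/2=n/2-1$ times (each symbol of $S$ has at most $m/2$ off-diagonal occurrences, and $2m+1$ edges split into five classes of size $\le m/2$). A rainbow path of length $n-1$ would be a rainbow Hamilton path $P$ of $K_{m+2}$; deleting $v_{m+1},v_{m+2}$ leaves at most $3$ subpaths on $\{v_1,\dots,v_m\}$, together spanning those $m$ vertices and using at least $(m+1)-4=m-3$ edges, all colours distinct and all from $\{1,\dots,m\}$ — a rainbow maximum-degree-$2$ subgraph of $S$ with more than $m-4$ edges, contradicting the key lemma. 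Hence Conjecture~\ref{Conjecture_Hahn} is false. The main obstacle is the key lemma: one must verify that the degree-$2$ relaxation of the transversal argument still closes, which it does only because $|N_i|=2x_i-1$ stays feasible after the budget doubles from $x_i$ to $2x_i$ (again via~(\ref{P1})), and because the symmetrised construction can be arranged with each symbol used \emph{exactly} $m$ times. This rigidity — the array has precisely $m$ symbols, so any long rainbow subgraph of it can avoid only few — is exactly what a naive direct attack on $K_n$ lacks, and is what makes the reduction go through.
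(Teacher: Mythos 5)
Your proposal is correct and follows essentially the same route the paper takes in its concluding remarks: symmetrise the construction of Theorem~\ref{Theorem_Counterexamples}, replace the one-cell-per-row-and-column count by a total degree budget of $2x_i$ on $V_i$ (so that $|N_i|=2x_i-1$ still forces any long rainbow subgraph of maximum degree $\le 2$ to miss many colours, with feasibility again coming from~(\ref{P1})), and read the symmetric array as an edge-colouring of the complete graph. Your additional step of padding with two extra vertices so that every colour has at most $m/2=n/2-1$ edges is a worthwhile refinement rather than a different method: the paper's one-sentence sketch glosses over the fact that a colour of a symmetric equi-$m$-square lying entirely off the diagonal yields exactly $m/2$ edges, which would narrowly violate the hypothesis of Conjecture~\ref{Conjecture_Hahn} on $K_m$ itself, and your reduction from a rainbow Hamilton path of $K_{m+2}$ to a rainbow degree-$\le 2$ subgraph of $K_m$ with more than $m-4$ edges closes that gap cleanly.
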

Here ``rainbow path'' means a path in the graph all of whose edges have the same colour. The relationship between this and Stein's Conjecture is that to every symmetric $n\times n$ array $S$, one can assign an edge-coloured complete graph $K_n$ by colouring edge $ij$ with the symbol in the $(i,j)$th entry of $S$ (since $S$ is symmetric, this gives a well-defined colouring of $K_n$). It is easy to see that under this correspondence, partial transversals in $K_n$ correspond to rainbow maximum degree $2$ subgraphs in $K_n$. Thus Conjecture~\ref{Conjecture_Hahn} would imply that every symmetric $n\times n$ array $S$ with $\leq n-1$ occurrences of each symbol has a partial transversal of size $n-1$. The proof of Theorem~\ref{Theorem_Counterexamples} can easily be run so it gives a  symmetric  array i.e. we get graphs satisfying the assumptions of  Conjecture~\ref{Conjecture_Hahn} without rainbow paths longer than $n- \frac1{42}\ln n$.
Here is another conjecture about rainbow subgraphs.
\begin{conjecture}[Aharoni, Barat, Wanless, \cite{ABW}]\label{Conjecture_noncomplete}
Let $G$ be a bipartite  graph with $> \Delta(G) + 1$ edges of each colour. Then $G$ has a rainbow matching using every colour.
\end{conjecture}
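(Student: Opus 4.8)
The plan is to recast Conjecture~\ref{Conjecture_noncomplete} as a rainbow-matching problem and attack it through Hall-type / topological machinery, with the essential gain forced to come from the bipartiteness of $G$ together with the bounded-degree hypothesis. Let the colour classes partition the edges as $E(G)=E_1\cup\dots\cup E_k$, with $|E_i|>\Delta(G)+1$ for each $i$, and write $\Delta:=\Delta(G)$. A rainbow matching using every colour is exactly a choice of one edge $e_i\in E_i$ per colour with the $e_i$ pairwise vertex-disjoint, i.e.\ a system of disjoint representatives. Passing to the line graph $L=L(G)$ and regarding $E_1,\dots,E_k$ as a partition of $V(L)$, this is precisely an independent transversal of $L$. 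This immediately suggests Haxell's theorem, which guarantees an independent transversal whenever every class has size at least $2\Delta(L)$; but $\Delta(L)\le 2\Delta-2$, so the crude bound only handles $|E_i|\gtrsim 4\Delta$, a factor roughly $4$ away from the conjectured threshold. Beating this is the whole problem.

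Next I would record the matching-number estimate that feeds the sharper tools. For nonempty $I\subseteq[k]$ set $H_I:=\bigcup_{i\in I}E_i$; since the $E_i$ are disjoint, $|E(H_I)|>|I|(\Delta+1)$ while $\Delta(H_I)\le\Delta$. As $G$ is bipartite, K\"onig's theorem gives $\nu(H_I)=\tau(H_I)\ge\lceil|E(H_I)|/\Delta\rceil\ge|I|+1$. So every union of $|I|$ colour classes contains a matching of size at least $|I|+1$: exactly a Hall condition with surplus one, which is the natural input for a topological Hall theorem.

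The core step, and the main obstacle, is to convert this into connectivity. The independence complex of $L$ is the matching complex $\mathcal M(G)$, and the Aharoni--Haxell topological Hall theorem reduces the existence of the transversal to showing that $\mathcal M(H_I)$ is $(|I|-1)$-connected for every $I$. The difficulty is that matching number does not convert into connectivity without loss: for the chessboard complex $\mathcal M(K_{m,m})$ the connectivity is only about $2\nu/3$, so the bound $\nu(H_I)\ge|I|+1$ is not by itself enough, and indeed the general estimate, that $\mathcal M(H)$ is only about $(\nu(H)/2-1)$-connected, merely reproduces the $4\Delta$ regime above. What distinguishes the present situation is the hypothesis $|E_i|>\Delta+1$, which forces each $H_I$ to be far from the dense configurations that make chessboard complexes poorly connected; the crux is therefore to prove a connectivity bound for matching complexes of \emph{bounded-degree} bipartite graphs strong enough to turn the surplus-one matching condition into $(|I|-1)$-connectivity. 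I would pursue this either by a discrete Morse / shelling argument on $\mathcal M(H_I)$, or by induction deleting a maximum-degree vertex and tracking the homotopy type, using bipartiteness (perfectness of $L(G)$) to keep the bookkeeping exact.

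As a combinatorial alternative, in case the connectivity bound proves elusive, I would attempt a direct augmentation: take a maximum rainbow matching $M$ and suppose it omits a colour $c$. Because $|E_c|>\Delta+1$, the class $E_c$ has a matching of size two surviving the deletion of any single vertex, so some edge of $E_c$ meets at most one edge of $M$; I would use such edges to grow an alternating chain of colour-swaps that enlarges the rainbow matching, the main technical issue being to guarantee termination and avoid cycling. Either route ultimately confronts the same obstacle --- squeezing the sharp constant out of the gap between the trivial surplus-one condition and the factor-two (or factor-four) losses in the off-the-shelf theorems --- which is precisely why the statement remains a conjecture.
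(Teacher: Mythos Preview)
Your proposal is aimed in the wrong direction: you are trying to \emph{prove} Conjecture~\ref{Conjecture_noncomplete}, but the paper in fact \emph{disproves} it. In the concluding remarks the authors take the array $S$ from Theorem~\ref{Theorem_Counterexamples}, pass to the associated coloured $K_{n,n}$, and then delete the two diagonals $\{x_iy_i\}$ and $\{x_iy_{i+1 \pmod n}\}$ to obtain an $(n-2)$-regular bipartite graph. Running the construction of Theorem~\ref{Theorem_Counterexamples} on the remaining cells yields an edge-colouring with $n-2$ colours, each used exactly $n>\Delta(G)+1$ times, and with no rainbow matching of size $n-\tfrac{1}{42}\ln n$; in particular there is no rainbow matching using every colour. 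So the conjecture is false, and no proof strategy can succeed.

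This also tells you exactly where your outline must break. Your surplus-one Hall condition $\nu(H_I)\ge |I|+1$ is genuinely correct, but the step you flagged as ``the core step, and the main obstacle'' --- upgrading it to $(|I|-1)$-connectivity of the matching complex, or alternatively pushing through an augmentation argument --- is not merely hard, it is impossible at this threshold: the counterexample shows that the connectivity bound you would need for bounded-degree bipartite graphs simply does not hold, and that any alternating-chain argument can be forced to cycle. The intuition that bounded degree keeps $H_I$ ``far from the dense configurations that make chessboard complexes poorly connected'' is therefore wrong in the regime $|E_i|\approx \Delta$; the paper's construction produces precisely such poorly connected unions.
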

To see the relationship between this and Stein's Conjecture: from an $n\times n$ array $S$, build a coloured $K_{n,n}$ with vertices $\{x_1, \dots, x_n, y_1, \dots, y_n \}$ by colouring the edge $x_iy_j\in K_{n,n}$ by the symbol in the $(i,j)$th entry of $S$. It is easy to see that under this correspondance a transversal in $S$ corresponds to a rainbow matching in $K_{n,n}$. 
Consider the  $n\times n$ array $S$ from Theorem~\ref{Theorem_Counterexamples} and the corresponding coloured $K_{n,n}$. Since $S$ has $n$ copies of each symbol, the corresponding $K_{n,n}$ has $n$ edges of each colour. Since $\Delta(K_{n,n})=n$, this is just short of the assumption of Conjecture~\ref{Conjecture_noncomplete} (and so of disproving the conjecture). 

However, it is easy to modify the construction slightly to actually get a counterexample e.g. by deleting the edges of the form $x_iy_i$ and $x_{i}y_{i+1\pmod n}$ in $K_{n,n}$. We use the proof of Theorem~\ref{Theorem_Counterexamples} to get a colouring of this  $(n-2)$-regular graph with $n-2$ colours such that each colour has $n$ edges, but there is no rainbow matching of size  $n- \frac1{42}\ln n$.  This corresponds to deleting two diagonals in the square $S$ in Theorem~\ref{Theorem_Counterexamples}, and checking that the proof still works if we omit these entries: The only parts that need to be checked are that the sets $F_i$ and $H_i\cup J_i$ have enough room to fit the colours they must contain. Specifically we need to check that $|H_i\cup J_i| \geq n(2x_i-1)$ and $|F_1\cup \dots\cup F_{n_0}|\geq n|B|$ still hold after deleting the two diagonals from these sets. This is indeed true because there is room in the inequalities we used for $|H_i\cup J_i|$ and $|F_1\cup \dots\cup F_{n_0}|$. 

The fact that our constructions can disprove Conjecture~\ref{Conjecture_noncomplete} and Conjecture~1.9 in \cite{AABCKLZ} was first noticed by Alon. He also found some interesting further modifications of our construction, see \cite{AlonPC} for details.
\end{itemize}

\subsection*{Acknowledgemnt}
The authors would like to thank Noga Alon for pointing us to the conjectures in \cite{AABCKLZ, ABW}, and for other remarks greatly improving the presentation of this paper.


\begin{thebibliography}{10}
\bibitem{AABCKLZ}
R. Aharoni, N. Alon, E. Berger, M. Chudnovsky, D. Kotlar, M. Loebl,  and R. Ziv.
\newblock Fair representation by independent sets
\newblock {\em In A Journey Through Discrete Mathematics},  31--58 , Springer, Cham., 2017


\bibitem{ABW}
R. Aharoni, J. Barat and I. Wanless.
\newblock Multipartite hypergraphs achieving equality in Ryser’s
conjecture.
\newblock {\em Graphs and Combinatorics.}, 32:1--15, 2016.



\bibitem{ABKZ}
R. Aharoni, E. Berger, D. Kotlar, and R. Ziv.
\newblock On a conjecture of Stein.
\newblock {\em Abh. Math. Semin. Univ. Hambg.}, 87:203--211, 2017.


\bibitem{AlonPC}
N. Alon.
\newblock Personal communication.
\newblock 2018.

\bibitem{AlonSpencer}
N. Alon and J. Spencer.
\newblock The Probabilistic Method.
\newblock {\em John Wiley \& Sons }, 1990.


\bibitem{AlonSpencerTetali}
N. Alon, J. Spencer, and P. Tetali.
\newblock Covering with latin transversals.
\newblock {\em Disc. Appl. Math.}, 57:1--10, 1995.


\bibitem{Brualdi}
R.~A. Brualdi and H.~J. Ryser.
\newblock {\em Combinatorial matrix theory}.
\newblock Cambridge University Press, 1991.


\bibitem{ErdosSpencer}
P Erd\H{o}s and J Spencer.
\newblock Lopsided Lov\'asz local lemma and Latin transversals.
\newblock {\em Disc. Appl. Math.}, 30:151--154, 1991.


\bibitem{Euler}
L. Euler.
\newblock Recherches sur une nouvelle esp\'ece de quarr\'es magiques.
\newblock {\em Verh. Zeeuwsch. Gennot. Weten. Vliss.}, 9:85--239, 1782.


\bibitem{Denes}
J. D\'enes.
\newblock Research problems. 
\newblock {\em Periodica Mathematica Hungarica,}, 17:245--246, 1986.

\bibitem{HahnThomassen}
G. Hahn and C. Thomassen.
\newblock Path and cycle sub-Ramsey numbers and an edge-colouring conjecture. 
\newblock {\em Discrete Math.}, 62:29--33, 1986.



\bibitem{keedwell2015latin}
A.D. Keedwell and J. D{\'e}nes.
\newblock Latin Squares and their Applications.
\newblock {\em Elsevier Science}, 2015.

\bibitem{MPS}
R. Montgomery, A. Pokrovskiy, and B. Sudakov.
\newblock Decompositions into spanning rainbow structures.
\newblock {\em preprint}, 2018.
  


\bibitem{Ryser}
H.~Ryser.
\newblock Neuere probleme der kombinatorik.
\newblock {\em Vortr\"age \"uber Kombinatorik, Oberwolfach},  69--91,
  1967.

\bibitem{Stein}
S.~K. Stein.
\newblock Transversals of {L}atin squares and their generalizations.
\newblock {\em Pacific J. Math.}, 59:567--575, 1975.




\end{thebibliography}
\end{document}